\numberwithin{equation}{section}
\theoremstyle{plain}
\newtheorem{theorem}[equation]{Theorem}   
\newtheorem{lemma}[equation]{Lemma} 
\newtheorem{corollary}[equation]{Corollary} 
\newtheorem{observation}[equation]{Observation}
\theoremstyle{definition}
\newtheorem{definition}[equation]{Definition} 
\newtheorem{remark}[equation]{Remark}
\DeclareMathOperator{\id}{id}
\begin{document}   

\renewcommand{\:}{\! :} 
\newcommand{\p}{\mathfrak p} 
\newcommand{\m}{\mathfrak m}
\newcommand{\e}{\epsilon}
\newcommand{\lra}{\longrightarrow}
\newcommand{\lla}{\longleftarrow} 
\newcommand{\ra}{\rightarrow} 
\newcommand{\altref}[1]{{\upshape(\ref{#1})}} 
\newcommand{\bfa}{\boldsymbol{\alpha}} 
\newcommand{\bfb}{\boldsymbol{\beta}} 
\newcommand{\bfg}{\boldsymbol{\gamma}} 
\newcommand{\bfM}{\mathbf M} 
\newcommand{\bfI}{\mathbf I} 
\newcommand{\bfC}{\mathbf C} 
\newcommand{\bfB}{\mathbf B} 
\newcommand{\bsfC}{\bold{\mathsf C}} 
\newcommand{\bsfT}{\bold{\mathsf T}}
\newcommand{\smsm}{\smallsetminus} 
\newcommand{\ol}{\overline} 
\newcommand{\os}{\overset}

\newlength{\wdtha}
\newlength{\wdthb}
\newlength{\wdthc}
\newlength{\wdthd}
\newcommand{\elabel}[1]
           {\label{#1}  
            \setlength{\wdtha}{.4\marginparwidth}
            \settowidth{\wdthb}{\tt\small{#1}} 
            \addtolength{\wdthb}{\wdtha}
            \raisebox{\baselineskip}
            {\color{red} 
             \hspace*{-\wdthb}\tt\small{#1}\hspace{\wdtha}}}  

\newcommand{\mlabel}[1] 
           {\label{#1} 
            \setlength{\wdtha}{\textwidth}
            \setlength{\wdthb}{\wdtha} 
            \addtolength{\wdthb}{\marginparsep} 
            \addtolength{\wdthb}{\marginparwidth}
            \setlength{\wdthc}{\marginparwidth}
            \setlength{\wdthd}{\marginparsep}
            \addtolength{\wdtha}{2\wdthc}
            \addtolength{\wdtha}{2\marginparsep} 
            \setlength{\marginparwidth}{\wdtha}
            \setlength{\marginparsep}{-\wdthb} 
            \setlength{\wdtha}{\wdthc} 
            \addtolength{\wdtha}{1.1ex} 
            \marginpar{\vspace*{-0.3\baselineskip}
                       \tt\small{#1}\\[-0.4\baselineskip]\rule{\wdtha}{.5pt} }
            \setlength{\marginparwidth}{\wdthc} 
            \setlength{\marginparsep}{\wdthd}  }


\title[CW-resolutions are supported on face posets]{CW-resolutions of monomial ideals that are supported on face posets}
\author[D. Wood]{Daniel Wood} 
\address{Department of Mathematics\\
         University at Albany, SUNY\\ 
         Albany, NY 12222}
\email{dwood@albany.edu}
\keywords{} 
\subjclass{} 

\begin{abstract}
Given a monomial ideal $I$ with minimal free resolution $\mathcal{F}$ supported in characteristic 
$p>0$ on a CW-complex $X$ with regular $2$-skeleton, we construct a CW-complex $Y$ that also 
supports~$\mathcal{F}$ and such that the 
face poset $P(Y)$ also supports $\mathcal{F}$ in the sense of Clark and Tchernev. 
\end{abstract}

\maketitle 

\section*{Introduction} 
Since the work of Taylor~\cite{Ta}, it has been an open problem in commutative algebra to describe explicitly minimal free resolutions of monomial ideals. One way in which 
this problem has classically been approached is to propose a geometric object which models the minimal free resolution $\mathcal{F}$. 
Originally, simplicial complexes were proposed as a model \cite{BaPeSt}. 
Some more general geometric objects proposed were CW-complexes and their cellular chain complexes, which were studied by Bayer and Sturmfels~\cite{C} and Batzies and Welker~\cite{BatWe}. However, 
Velasco~\cite{D} has shown that there are monomial ideals with resolutions that are not supported 
by any CW-complex. Clark and Tchernev~\cite{CT} introduced the notion of resolutions supported on a poset and showed that for every monomial ideal $I$ there is a 
poset $P$ 
that 
supports $\mathcal{F}$. While there are many possible choices for such a poset, $P$, a consequence of Velasco's examples \cite{D} is that, in general, $P$ can not be a face poset of a CW-complex. 
A natural question that arises is to find conditions that allow us to find 
a CW-complex whose face poset supports $\mathcal{F}$.

In the main result of this paper, Theorem~\ref{T:main}, we show that if $I$ has a minimal free resolution $\mathcal{F}$ supported over a field $k$ of characteristic $p>0$ on a CW-complex~$X$ with regular $2$-skeleton, then there is a CW-complex $Y$ that also supports $\mathcal{F}$, and such that the  face poset 
$P(Y)$ of $Y$ over $k$ supports $\mathcal{F}$ as well.



The structure of this paper is as follows. Section 1 outlines the preliminary information necessary for this paper. In Section 2, we discuss deformations of CW-complexes. 
In Section 3, we examine properties of CW-complexes that are homologically isomorphic. In Section 4 we state and prove the main result of the paper. 

I would like to thank both Alexandre Tchernev and Marco Varisco for their helpful discussions and insights. 

\section{Preliminaries}
Fix the following notation: let $k$ be a field of characteristic $p>0$, 
and let $R=k[x_1, \ldots, x_n]$ be a polynomial ring with the standard $\mathbb{Z}^n$-grading. Let 
\[
I = (m_1, \ldots, m_q) \subseteq R
\] 
be a monomial 
ideal of $R$, and let $\mathcal{F}$ be the minimal free resolution of $I$:
\[
\mathcal{F}: 0 \lla F_0 \os{\phi_1}{\lla} F_1 \lla \dotsb
\lla F_{i-1} \os{\phi_i}{\lla} F_i \lla 
\dotsb \lla F_n \lla 0,
\] 
where each $F_i$ is a free $\mathbb{Z}^n$-graded $R$-module, i.e.,
 $F_i = \bigoplus{R(-\alpha)^{\beta_{i,\alpha}}}$ for each $i$ and the differentials 
$\phi_i$ preserve degrees.

Let $X$ be a CW-complex. 
The cellular chain complex of $X$ is the complex $C(X;k)$ of relative homology groups
\begin{multline*}
\ldots \lra H_{n}(X^n,X^{n-1};k) \stackrel{\partial_n}{\lra} 
            H_{n-1}(X^{n-1},X^{n-2};k) \lra 
						\ldots\\
						\ldots \lra H_{1}(X^1,X^{0};k) 
						\stackrel{\partial_1}{\lra} H_{0}(X^0,X^{-1};k) \lra 0,
\end{multline*}
where $X^{-1}$ is the empty set and $X^n$ is the $n$-skeleton of the CW-complex $X$. Let $\partial_0 : 
H_{0}(X^0,X^{-1};k) \lra coker(\partial_1)$ be the canonical projection. 

\begin{remark}
Unless otherwise stated, we will take the basis of $H_{i}(X^{i}, X^{i-1};k)$ to be the set $B_i$ of classes of characteristic maps $\Phi: D^{i} \lra X^{i}$ 
of $i$-cells of $X$, and we set $B = \coprod_{i \geq 0}{B_i}$. We will call $B$ the 
\emph{standard} basis of $C(X;k)$.
\end{remark}

Using these bases, for each $i \geq 1$ we can write the map $\partial_i$ as
\[
\partial_{i}(\sigma) = \sum_{\tau \in B_{i-1}} [\sigma : \tau] \tau
\]
where $[\sigma: \tau]$ is called the \emph{incidence coefficient} of the cells $\sigma$ and $\tau$ over the field $k$.

Consider $B$ ordered as follows: 
for $\sigma$ an $n$-cell and for $\tau$ an $(n-1)$-cell, we will say that 
$\tau \lessdot \sigma \iff [\sigma : \tau] \neq 0 \in k$. 
This leads to a poset, where we define 
\[ \gamma < \sigma \iff \exists \gamma_0 , \ldots, \gamma_m \] with
\[ \gamma = \gamma_0 \lessdot \gamma_1 \lessdot \ldots \lessdot \gamma_m = \sigma \]
which we will call the \emph{incidence poset} or \emph{face poset} over $k$ of the CW-complex $X$ over the field $k$, and 
will denote it with $P$, or $P(X)$ or $P(X;k)$ if the context requires more clarity.
We say that $X$ is a \emph{graded} CW-complex if there is a map 
$mdeg:P(X) \longrightarrow \mathbb{Z}^m$ which is order preserving.
Recall \cite{C, BatWe} that 
$\mathcal{F}$ is \emph{supported on a CW-complex} if there exists a graded CW-complex 
				$X$ that satisifies the following properties:
				\begin{enumerate}
				\item For each $i$, there is a bijection $\eta$ between the basis of $F_i$ and the $i$-cells 
				of~$X$.
				
				\item For each basis element $\sigma$ of $F_i$, we have that $deg(\sigma) = 
				mdeg(\eta(\sigma))$.
				
				\item $\phi_{i}(\sigma)=\sum_{\tau}[\eta(\sigma):\eta(\tau)]x^{mdeg(\eta(\sigma))-mdeg(\eta(\tau))}\tau$.

			\end{enumerate}

%

We also recall the following notions from \cite{CT}. Let $\mathscr{G}$ be an acyclic chain complex 
of free $R$-modules $G_n$.
For each $i$ let $A_i$ be any  basis of $G_i$, and let $A=\coprod{A_i}$. Let $z = \sum_{\tau \in A_i}c_{\tau}\tau$ be an element of $G_i$. The \emph{support} of $z$ with respect to the basis $A$ is the set 
\[
supp_{A}(z) = \left\{ \tau \in A_i : c_\tau \neq 0 \right\}.
\]
When $z \in G_{i}$, we say $z$ is of \emph{minimal support} if it is the case that $z \in im(\partial_{i+1})$ and there is no $0 \neq y \in ker(\partial_i)$ such that 
$supp_{A}(y) \subsetneq supp_{A}(z)$.
When $z \in G_i$ with $i \geq 1$, $supp_{A}(\partial_{i}(z))$ is called the 
\emph{boundary support} of the element $z$. The basis $A$ is called a \emph{basis of minimal (boundary) support} if for each $i \geq 1$ 
and for each $z \in A_i$, the element $\partial_{i}(z) \in G_{i-1}$ is of 
minimal support relative to the basis $A$.

The main result of this paper will deal with monomial ideals $I$ for which $\mathcal{F}$ is 
\emph{supported on a poset}. For the full definition of resolutions supported on a poset, we refer the reader to 
\cite[Definition 2.6]{CT}. We also need the definition of the incidence poset.

\begin{definition}\cite[Definition 4.1]{CT}
Suppose that $\mathscr{G}$ is a chain complex of free $R$-modules $G_n$. 
Let $A_n$ be the basis of $G_n$. Denote by $P(\mathscr{G},A)$ the poset structure on 
$A=\coprod_{n \geq 0}{A_n}$ with ordering given by transitivity applied to the covers: for $x \in A_n$ and $y \in A_{n-1}$, we will have that $y \lessdot x$ if and only if $[x:y] \neq 0 \in k$, where $\partial_{n}(x) = \sum_{z \in A_{n-1}}[x:z]z$. Then we will call $P(\mathscr{G},A)$ the \emph{incidence poset of $\mathscr{G}$} over~$k$ with respect to the basis $A$. 
\end{definition}

We will need the following sufficient condition for when $\mathcal{F}$ is supported on a poset.

\begin{theorem}\label{T:AlexTim47}\cite[Theorem 4.7]{CT}
Let $k$ be a field, let $R = k[x_1, \ldots, x_n]$ be a polynomial ring over $k$, let $I$ be a 
monomial ideal in $R$, and let $\mathcal{F}$ be a minimal $\mathbb{Z}^n$-graded free resolution of $I$ 
over $R$. Suppose that $A$ is a homogenous basis of $\mathcal{F}$ with minimal support, and let 
$deg: P(\mathcal{F},A) \lra \mathbb{Z}^n$ be the map that assigns to each element of $A$ its $\mathbb{Z}^n$-degree 
as an element of $\mathcal{F}$.

Then $deg$ is a morphism of posets, and $P(\mathcal{F},A)$ supports $\mathcal{F}$.
\end{theorem}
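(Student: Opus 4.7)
The plan is to handle the two conclusions separately: the claim that $deg$ is a morphism of posets follows from the $\mathbb{Z}^n$-grading, whereas the claim that $P(\mathcal{F}, A)$ supports $\mathcal{F}$ is where the minimal support hypothesis has to do the real work in matching the axioms of \cite[Definition 2.6]{CT}.

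For the first part, fix a cover $y \lessdot x$ with $x \in A_i$ and $y \in A_{i-1}$. By definition of the incidence poset, the coefficient of $y$ in the expansion $\phi_i(x) = \sum_{z \in A_{i-1}} r_{x,z}\, z$ is a nonzero scalar in $k$. Since $A$ is homogeneous and $\phi_i$ preserves the $\mathbb{Z}^n$-grading, each $r_{x,z}$ lies in the graded piece $R_{deg(x) - deg(z)}$, which is nonzero only when $deg(x) \geq deg(z)$ coordinatewise and in that case is the one-dimensional $k$-span of the monomial $x^{deg(x) - deg(z)}$. Hence $y \lessdot x$ forces $deg(y) \leq deg(x)$, and transitivity of the order extends this to all $y < x$. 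A useful byproduct is the explicit formula $\phi_i(x) = \sum_{y \in A_{i-1}} [x:y]\, x^{deg(x) - deg(y)}\, y$.

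For the second part, I would verify the clauses of \cite[Definition 2.6]{CT} in turn. The bijection between the basis $A_i$ of $F_i$ and the rank-$i$ elements of $P(\mathcal{F}, A)$ is the identity by construction, and the degree compatibility is immediate from the choice of $deg$. The formula above writes each $\phi_i$ in exactly the shape demanded of a poset-supported differential, with the incidence coefficients $[x:y] \in k$ playing their required role. The substantive condition is that the boundary $\phi_i(\sigma)$ be pinned down, up to scalar, by the subposet below $\sigma$ together with the grading; this is the main obstacle I anticipate, and it is where minimality of $supp_A(\phi_i(\sigma))$ intervenes. Any nonzero candidate cycle supported on the same down-set that differed from $\phi_i(\sigma)$ would, upon subtraction, produce a nonzero cycle of strictly smaller support in $\ker(\phi_{i-1})$, contradicting the assumption that $\phi_i(\sigma)$ has minimal support. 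I would treat the base case $i = 0$ separately, where $\phi_0$ corresponds to the surjection onto $I$ and the added minimum of $P$ records the generators, and then conclude that all axioms of \cite[Definition 2.6]{CT} are satisfied.
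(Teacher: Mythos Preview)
The paper does not give a proof of this theorem; it is quoted verbatim from \cite[Theorem~4.7]{CT} and used as a black box in the proof of Theorem~\ref{T:main}. There is therefore no in-paper argument to compare your proposal against.

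As a standalone outline your sketch is on the right track. The first conclusion is exactly as you say: homogeneity of $A$ and of the differentials forces each nonzero coefficient $r_{x,y}$ to lie in $R_{deg(x)-deg(y)}$, hence $deg(y)\le deg(x)$ whenever $y\lessdot x$, and the explicit monomial form of $\phi_i$ follows. For the second conclusion, your uniqueness argument is the correct mechanism: if some other nonzero cycle $z'$ satisfied $supp_A(z')\subseteq supp_A(\phi_i(\sigma))$ and were not a scalar multiple of $\phi_i(\sigma)$, then after rescaling to match one coefficient the difference would be a nonzero element of $\ker(\phi_{i-1})$ with strictly smaller support, contradicting the minimal-support hypothesis. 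The one caution is that \cite[Definition~2.6]{CT} is phrased in terms of frames on the poset and the associated poset construction, not simply as a list of coefficient identities; the proof in \cite{CT} runs through that machinery, so to turn your sketch into a complete argument you would need to match your coefficient-level statements to those structural axioms rather than treat them as self-evident.
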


It is in the context of Theorem~\ref{T:AlexTim47} that we state our main result, Theorem~\ref{T:main}.

\section{Homology isomorphisms between CW-complexes}
In this section, we establish a convenient fact that allows us to select another CW-complex supporting 
$\mathcal{F}$ with a different selection of attaching maps.
When we say that a CW-complex $X$ is \emph{regular}, we mean that the closure of every cell 
is homeomorphic to a closed ball of the same dimension. For more on this definition, we refer to \cite[p. 534]{B}. 

Recall that a continuous map $f:X \lra Y$ between CW-complexes $X$ and $Y$ is called \emph{cellular} 
if $f(X^n) \subseteq Y^n$, that is, $f$ carries the $n$-skeleton of $X$ onto that of $Y$ \cite[p.~348-349]{B}.

\begin{lemma}\label{L:two}
Let $X$ be an $n$-dimensional connected CW-complex with $n \geq 3$. Suppose there are $l$ cells of 
dimension $n$. For each $n$-cell $\sigma_{j}$ of $X$, 
suppose $F_{j}:(D^n, S^{n-1}) \lra (X^n, X^{n-1})$ is the characteristic map for this 
cell. 
Suppose we are given a $(n-1)$-dimensional CW-complex $Y^{n-1}$ together with
a matrix 
$A=\left(a_{ij}\right)$ in $GL_{l}(\mathbb{Z})$ and a cellular map $\Phi_{n-1} : X^{n-1} \lra Y^{n-1}$
Then there exists an $n$-dimensional CW-complex $Y$ obtained from $Y^{n-1}$ by attaching $l$ cells 
of dimension $n$, with characteristic maps $G_{i}:(D^n, S^{n-1}) \lra (Y^n, Y^{n-1})$, 
and a cellular map $\Phi_n:X^n \lra Y^n$ which extends $\Phi_{n-1}$ and satisfies
\[
[G_i]=\sum_{j}a_{ij}[\Phi_n \circ F_j] \in \pi_{n}(Y^n, Y^{n-1}).
\]
\end{lemma}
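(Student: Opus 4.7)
My plan is to construct attaching maps $g_i\colon S^{n-1}\lra Y^{n-1}$ realizing the prescribed homotopy classes, attach $n$-cells along them to form $Y$, and then extend $\Phi_{n-1}$ to a cellular map $\Phi_n\colon X^n\lra Y^n$, finally adjusting the extensions so that the target identity in $\pi_n(Y^n,Y^{n-1})$ holds on the nose.

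Write $h_j=\Phi_{n-1}\circ f_j\colon S^{n-1}\lra Y^{n-1}$, where $f_j=F_j|_{S^{n-1}}$. Because $n\geq 3$, the group $\pi_{n-1}(Y^{n-1})$ is abelian, so by the standard pinching construction (decompose $S^{n-1}$ as a wedge of $\sum_j|a_{ij}|$ spheres and map each summand via $h_j$ with the sign-appropriate orientation) I obtain a representative $g_i\colon S^{n-1}\lra Y^{n-1}$ of the class $\sum_j a_{ij}[h_j]$. Attaching $l$ new $n$-cells along the $g_i$ produces a CW-complex $Y=Y^n$ whose characteristic maps $G_i$ satisfy $G_i|_{S^{n-1}}=g_i$.

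To construct $\Phi_n$, I must show each $h_j$ extends across $D^n$ into $Y^n$. By construction $\sum_j a_{ij}[h_j]=[g_i]=0$ in $\pi_{n-1}(Y^n)$ for every $i$, and since $B:=A^{-1}$ has integer entries, multiplying by $B$ gives $[h_j]=\sum_i b_{ji}[g_i]=0$ in $\pi_{n-1}(Y^n)$ for each $j$. Pick any extensions $\bar{F}_j\colon(D^n,S^{n-1})\lra(Y^n,Y^{n-1})$ with $\bar{F}_j|_{S^{n-1}}=h_j$, and define $\Phi_n$ by $\Phi_n|_{X^{n-1}}=\Phi_{n-1}$ and $\Phi_n\circ F_j=\bar{F}_j$; since $X^n$ is the pushout of $X^{n-1}$ along the $F_j$, this prescription yields a well-defined cellular map.

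The main obstacle is upgrading the identity $[G_i]=\sum_j a_{ij}[\Phi_n\circ F_j]$ from holding after applying $\partial$ to holding on the nose in $\pi_n(Y^n,Y^{n-1})$. The connecting map $\partial\colon\pi_n(Y^n,Y^{n-1})\lra\pi_{n-1}(Y^{n-1})$ sends both sides to $[g_i]=\sum_j a_{ij}[h_j]$, so the difference $\beta_i:=[G_i]-\sum_j a_{ij}[\bar{F}_j]$ lies in $\ker\partial$, which by the long exact sequence of $(Y^n,Y^{n-1})$ equals the image of $\pi_n(Y^n)\lra\pi_n(Y^n,Y^{n-1})$. I exploit the freedom in the extensions: any two extensions of $h_j$ differ, up to homotopy of pairs, by an element of $\pi_n(Y^n)$ obtained by gluing the two disks along their common boundary $S^{n-1}$ to form an $n$-sphere in $Y^n$. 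Lifting each $\beta_i$ to some $\tilde\beta_i\in\pi_n(Y^n)$ and replacing each $\bar{F}_j$ by its modification along $\sum_i b_{ji}\tilde\beta_i\in\pi_n(Y^n)$ alters $[\bar{F}_j]$ in $\pi_n(Y^n,Y^{n-1})$ by the image of this element, and since $AB=I$ a routine index computation shows that the resulting identity $[G_i]=\sum_j a_{ij}[\bar{F}_j]$ holds exactly.
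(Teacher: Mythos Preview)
Your argument is correct and follows the same overall strategy as the paper: define $g_i$ as a representative of $\sum_j a_{ij}[\Phi_{n-1}\circ f_j]$, attach cells along the $g_i$ to form $Y$, and use $A^{-1}=(b_{ji})$ to produce extensions of the $h_j$ that yield $\Phi_n$. The one difference is in how the final identity in $\pi_n(Y^n,Y^{n-1})$ is secured. You pick arbitrary extensions $\bar F_j$ and then correct them by classes in $\pi_n(Y^n)$ chosen via $A^{-1}$; the paper instead \emph{starts} from the specific extensions $H_j=\sum_i b_{ji}G_i$, for which $[G_i]=\sum_j a_{ij}[H_j]$ holds automatically since $AB=I$, and then uses the homotopy extension property to replace each $H_j$ by a homotopic $H'_j$ whose boundary is exactly $\Phi_{n-1}\circ f_j$. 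The paper's ordering avoids your obstruction-theoretic correction step entirely, but both executions are valid and of comparable length.
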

\begin{proof}
Pick a basepoint $z \in X$. By \cite [Proposition 0.18, p.~16]{B}, if $\phi$ and $\psi$ are homotopic attaching maps 
for an $n$-cell, then the spaces obtained by attaching a cell along those maps are homotopy equivalent. 
That is to say, if 
\[ 
\begin{CD}
S^{n-1} @>\phi>> Z \\
@VVV             @VVV \\
D^{n} @>\Phi>> X(\phi) \\
\end{CD}
\]
is a pushout diagram for attaching an $n$-cell to a space $Z$, then the fact that $\phi$ is homotopic 
to  $\psi$ implies that $X(\phi)$ is homotopy equivalent $X(\psi)$. As a result of this, we may assume, since $X$ is connected, that the maps $F_j$ are already based maps. 
For based maps $f,g: S^{n-1} \lra Z$, the definition of the map $f+g : S^{n-1} \lra Z$ is given in \cite [p. 340]{B}.


Let $F_{j}|_{S^{n-1}}=f_j$, let $g_{i} = \sum_{j} a_{ij}(\Phi_{n-1} \circ f_{j})$, and let $Y$ be obtained by attaching 
$n$-cells to $Y^{n-1}$ along the maps $g_{i}$.  
For each $i$,
one has in $\pi_{n-1}(Y^{n-1})$ that
$$
\left[g_{i}\right]=\sum_{j} a_{ij}[\Phi_{n-1} \circ f_{j}].
$$
Now, let $G_i:D^n \lra Y$ be the characteristic map for the $n$-cells of $Y$, thus
\[ 
G_i|_{S^{n-1}}=g_i=\sum_{j} a_{ij}(\phi_{n-1} \circ f_{j}).
\] 
To show the existence of a cellular map $\Phi: X \lra Y$, let $A^{-1}=(b_{ij})$ and let
\[
H_j = \sum_{i}b_{ij}G_i.
\]
Therefore $H_{j}|_{S^{n-1}} = h_j = \sum_{j}b_{ij}g_i$ and so in $\pi_{n-1}(Y^{n-1})$ we get 
\[
[h_j]=\sum_{i}b_{ij}[g_j]=\sum_{i}b_{ij}\sum_{k}a_{ik}[\Phi_{n-1} \circ f_k]=[\Phi_{n-1} \circ f_j].
\]
By the homotopy extension property, there exists $H'_i:(D^n, S^{n-1}) \lra (Y^n, Y^{n-1})$ such that 
$[H_i]=[H'_i]$ in $\pi_{n}(Y^n, Y^{n-1})$ and $H'_{i}|_{S^{n-1}}=\Phi_{n-1} \circ f_j$.
By the universal property of pushouts, there exists $\Phi_n : X^n \lra Y^n$ that makes the diagram
\[
\begin{CD}
\coprod{S^{n-1}} @>{\coprod{f_j}}>> X^{n-1} @>{\Phi_{n-1}}>> Y^{n-1} \\
@VVV                @VVV               @VVV \\
\coprod{D^n} @>{\coprod{F_j}}>>   X^n       @>{\Phi_{n}}>> Y^n \\
@|								@.								@| \\
\coprod{D^n} @>{\coprod{H'_j}}>>  Y^n @= Y^n
\end{CD}
\]
commute. Finally we get in $\pi_{n}(Y^n, Y^{n-1})$ the equality
\[
[G_i]=\sum_{j}a_{ij}[H_j] = \sum_{j}a_{ij}[H'_j] = \sum_{j}a_{ij}[\Phi_n \circ F_j]
\]
which completes the proof. 
\end{proof}

\section{Main Results} 
In this section, we state and prove the main result of the paper:

\begin{theorem}\label{T:main}
Let $I$ be a monomial ideal in a polynomial ring $R = k[x_1, \ldots, x_n]$, where $k$ is a fixed field
of characteristic $p>0$, 
and let $X$ be a CW-complex with regular $2$-skeleton such that $X$ supports the minimal free resolution $\mathcal{F}$ of $I$. Then there exists a CW-complex $Y$ such that $Y$ supports $\mathcal{F}$, and the incidence poset $P(Y)$ of 
$Y$ over $k$ supports $\mathcal{F}$ in the sense of \cite{CT}. 

\end{theorem}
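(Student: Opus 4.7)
My plan is to construct $Y$ by modifying the $n$-cells of $X$ for $n \geq 3$ using Lemma~\ref{L:two}, so that the cellular basis of $Y$ (viewed through the bijection $\eta$) becomes a minimum-boundary-support basis of $\mathcal{F}$. Once that is achieved, Theorem~\ref{T:AlexTim47} yields directly that $P(Y)$ supports $\mathcal{F}$.

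The first step is to choose a target homogeneous basis $A = \coprod A_i$ of $\mathcal{F}$ in which every $a \in A_i$ has $\partial_i(a)$ of minimum support. Since the incidence coefficients of $X$ over $k$ are integers reduced modulo $p$, the support matroid of each differential in each multigraded piece is defined over $\mathbb{F}_p$, and its circuits are field-extension invariant. Within each multigraded piece of $\ker(\partial_{i-1}) = \mathrm{im}(\partial_i)$ one can greedily extract minimum-support cycles, lift them to $F_i$, and assemble a homogeneous basis $A$ whose change-of-basis matrix $M_i : B_i \to A_i$ is block-diagonal over multidegrees, with each block $M_{i,\alpha} \in GL_{\beta_{i,\alpha}}(\mathbb{F}_p)$.

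For $i \leq 2$, where Lemma~\ref{L:two} does not apply, I show the original cells of $X$ already have minimum-support boundaries. For $i = 1$, regularity forces $\phi_1(e)$ to have support of size $2$ in $F_0$, which is minimum because no single basis element $e_j$ of $F_0$ is a cycle (as $\phi_0(e_j) = m_j \neq 0$). For $i = 2$, the attaching map of each $2$-cell $\sigma$ is an embedding of $S^1$, so $\mathrm{supp}(\phi_2(\sigma))$ is a simple cycle in the $1$-skeleton; a direct flow computation shows that for any multidegree $\beta$, every homogeneous cycle in $F_1$ supported in this simple cycle must have all its coefficients equal, hence is either zero or has full support. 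Thus I take $A_i = B_i$ for $i \leq 2$. For $i \geq 3$, I lift each block $M_{i,\alpha}$ to $\tilde M_{i,\alpha} \in GL_{\beta_{i,\alpha}}(\mathbb{Z})$ using surjectivity of $GL_n(\mathbb{Z}) \to GL_n(\mathbb{F}_p)$ for $n \geq 2$ (size-$1$ blocks need no lift), and assemble into $\tilde M_i \in GL_{l_i}(\mathbb{Z})$. Starting from $Y^2 = X^2$ with $\Phi_2 = \mathrm{id}$, I inductively apply Lemma~\ref{L:two} with $\tilde M_i$ to build $Y^i$ from $Y^{i-1}$ along with a cellular extension $\Phi_i : X^i \to Y^i$ of $\Phi_{i-1}$. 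The cells of the resulting $Y$ realize the basis $A$, so $Y$ supports $\mathcal{F}$ and its cellular basis has minimum boundary support, whence Theorem~\ref{T:AlexTim47} gives that $P(Y) = P(\mathcal{F},A)$ supports $\mathcal{F}$.

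The principal obstacle is establishing the existence of the target basis $A$, especially in multidegree blocks of size $1$, where no linear-algebraic adjustment is available and one must rule out the existence of smaller-support cycles coming from other multidegrees. The regular-$2$-skeleton hypothesis handles this cleanly in dimensions $\leq 2$ via the flow calculation described above; in higher dimensions one must verify the analogous statement using a careful combination of matroid circuit extraction with the multigraded structure of $\mathcal{F}$, and it is this interaction between the global minimum-support condition and the local (within-multidegree) basis change that is the delicate point of the argument.
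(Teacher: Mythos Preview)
Your overall strategy is the paper's: keep $Y^2=X^2$, build $Y^n$ for $n\ge 3$ by iterating Lemma~\ref{L:two} so that the standard cellular basis of $Y$ realizes a homogeneous minimal-support basis $A$ of $\mathcal{F}$, and then apply Theorem~\ref{T:AlexTim47}. The reduction to $k=\mathbb{F}_p$ and the lifting of the change-of-basis matrices to integer matrices also match the paper (packaged there as Lemma~\ref{T:four} and Corollary~\ref{C:last}).

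The genuine gap is exactly where you flag it: the \emph{existence} of the homogeneous minimal-support basis $A$ with $A_i=B_i$ for $i\le 2$. Your ``greedy matroid-circuit extraction within each multidegree'' does not, as written, establish the minimal-support condition, which is global: for each $a\in A_i$ one must rule out nonzero cycles $y\in\ker\partial_{i-1}$ of \emph{any} degree with $\operatorname{supp}_{A_{i-1}}(y)\subsetneq\operatorname{supp}_{A_{i-1}}(\partial_i a)$, and you give no argument for this in dimensions $\ge 3$. Your worry about size-$1$ multidegree blocks, where no nontrivial homogeneous basis change is available, shows you see the obstruction. The paper does not attempt your matroid argument; it simply invokes Lemma~\ref{L:AlexTim}, i.e.\ \cite[Lemma~3.5 and its proof]{CT}, which asserts precisely that homogeneous minimal-support bases $C_0,C_1,C_2$ extend to a full homogeneous minimal-support basis of $\mathcal{F}$. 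That citation is the missing ingredient; once you use it the size-$1$ block concern disappears. Your direct arguments for $i=1,2$ are fine and amount to what \cite[Theorem~4.5]{CT} proves, which the paper records as Lemma~\ref{L:reg}.

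One side remark: the surjectivity of $GL_n(\mathbb{Z})\to GL_n(\mathbb{F}_p)$ that you invoke (as does the paper) fails for $p\ge 5$, since every integer lift has determinant $\pm 1$. This is harmless here: scaling basis elements by units of $k$ does not change supports, so one may first normalize each block of $\overline{M}_i$ into $SL$ and then lift via the (valid) surjection $SL_n(\mathbb{Z})\to SL_n(\mathbb{F}_p)$.
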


Before we prove this, we will need the following consequence of Lemma 3.5 and its proof from \cite{CT} about resolutions supported on posets. For details about the proof we refer the reader there. 


\begin{lemma}\label{L:AlexTim}\cite[Lemma 3.5 and its proof]{CT}
Suppose $R=k[x_1, \ldots, x_m]$ is a polynomial ring over a field $k$ with the standard 
$\mathbb{Z}^m$-grading, and $\mathcal{F}$ is a $\mathbb{Z}^m$-graded free resolution of a torsion-free 
$\mathbb{Z}^m$-graded $R$-module $M$. Let $C_0$ be any basis of $F_0$ consisting of homogenous elements, 
and for $i=1,2$ let $C_i$ be a homogeneous basis of minimal support for $F_i$.

Then $\mathcal{F}$ has a basis with minimal support $A$ consisting of homogenous elements, and such that 
$A_i = C_i$ for $i=0,1,2$.

\end{lemma}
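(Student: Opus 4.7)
The plan is to proceed by induction on the homological degree $i$, with the base cases $i = 0, 1, 2$ handled by setting $A_i := C_i$. By the given hypotheses, this already ensures the minimal-boundary-support condition holds at degrees $1$ and $2$ (the boundaries $\partial_1(C_1)$ and $\partial_2(C_2)$ consist of minimal-support elements measured against $A_0 = C_0$ and $A_1 = C_1$ respectively). It remains to construct a homogeneous basis $A_i$ of $F_i$ for each $i \geq 3$ such that $\partial_i(a)$ is of minimal support with respect to $A_{i-1}$ for every $a \in A_i$.

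The inductive step rests on two module-theoretic observations. First, since $\mathcal{F}$ is exact, $\partial_i$ surjects onto $K := \ker(\partial_{i-1}) = \im(\partial_i) \subseteq F_{i-1}$. Second, in the $\mathbb{Z}^m$-graded setting, and using that $M$ is torsion-free so that the reductions below are well-behaved, the submodule $K$ is generated by its homogeneous minimal-support cycles: given any homogeneous cycle $z$ of non-minimal support, there exists a strictly smaller-support cycle $w$, and a monomial-multiple Nakayama-style reduction expresses $z$ as an $R$-linear combination of minimal-support cycles. Finiteness of each multigraded component together with Noetherianity ensures the reduction terminates.

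Using these, I would construct $A_i$ as follows. Start from any homogeneous basis $B_i$ of $F_i$. For each $b \in B_i$ whose boundary $\partial_i(b)$ is not of minimal support, lift a representation of $\partial_i(b)$ in terms of minimal-support cycles back through $\partial_i$ (by surjectivity of $\partial_i$ onto $K$), and perform an elementary upper-triangular exchange on $B_i$ that leaves the basis property intact while replacing $b$ with an element whose boundary is of minimal support. The resulting collection $A_i$ is then a homogeneous basis meeting the required condition, completing the induction.

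The main obstacle I anticipate is verifying that the exchange actually produces a basis, not merely a generating set, and that it is compatible with the $\mathbb{Z}^m$-grading on $F_i$. The clean way to handle this is to work multidegree by multidegree: each graded piece $(F_i)_{\alpha}$ is a finite-dimensional $k$-vector space, and by totally refining the multidegree partial order one can arrange the transition matrix between $B_i$ and the new basis to be upper-triangular with unit diagonal, so invertibility is automatic. This bookkeeping is the substantive content of the construction, while the module-theoretic generation statement for minimal-support cycles is a standard feature of multigraded syzygy theory underpinning the framework of \cite{CT}.
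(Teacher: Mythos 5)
the paper offers no proof of this lemma --- it is imported wholesale from \cite[Lemma 3.5 and its proof]{CT} --- so the relevant comparison is with the argument in that reference, and your outline does essentially reproduce it: induct on homological degree with $A_i=C_i$ for $i\le 2$; show that $\im(\partial_i)$ is generated by homogeneous cycles of minimal support (torsion-freeness enters exactly where you indicate, namely to divide a cycle by a common monomial factor and remain a cycle, so that a smaller-support cycle can be pushed into the multidegree of the element being reduced and then split off, with termination by induction on the size of the support); lift these generators through $\partial_i$; and exchange them into the basis.

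The step that would fail as written is your justification of the exchange. Refining the multidegree order cannot make the transition matrix from $B_i$ to the new basis upper-triangular with unit diagonal: several basis elements may share a multidegree, and within a fixed multidegree the required change of basis is an arbitrary invertible matrix over $k$, so invertibility is not ``automatic'' and a one-at-a-time unipotent replacement can be impossible. The correct mechanism is graded Nakayama. Let $S$ be the set of homogeneous $v\in F_i$ such that $\partial_i(v)$ is of minimal support; your generation-plus-lifting argument shows that $S$ generates $F_i$ (note that $\ker(\partial_i)\subseteq S$, since $0$ is vacuously of minimal support, so the discrepancy between a basis element and the lift of its boundary causes no trouble). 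Hence the image of $S$ spans the graded $k$-vector space $F_i/\mathfrak{m}F_i$, and a Steinitz exchange there --- replacing each bad $b_j$ by some $s\in S$ lying outside the span of the remaining basis elements --- produces a homogeneous basis $A_i\subseteq S$; the transition matrix is invertible because each replacement has a unit coefficient on the element it replaces, not because it is triangular. With that substitution for your final paragraph, the outline becomes a correct proof along the same lines as \cite{CT}.
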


Now we come to a lemma that is a key step in the proof of our main theorem. 

\begin{lemma}\label{T:four}
Let $X$ be a finite dimensional connected CW-complex. 
For each $i$, let $\left\{F^{i}_{\lambda}\right\}_{\lambda \in \Lambda_i}$
be the set of characteristic maps of the $i$-cells of $X$. 
For each $i \geq 3$, let $A_i=(m^{(i)}_{\omega, \lambda})$ be an invertible matrix over $\mathbb{Z}$ of size $|\Lambda_i|=l_i$.
Then there exists a finite dimensional connected CW-complex $Y$ 
and a cellular map $\phi: X \lra Y$ such that 
the set of characteristic maps 
$\left\{G^{i}_{\lambda}\right\}_{\lambda \in \Lambda_i}$ of the $i$-cells of $Y$
satisfies
$$
\left[G^{i}_{\omega}\right]=\sum{m_{\omega, \lambda}\ \left[\phi_ \circ F^{i}_{\lambda}\right]}
\in \pi_{i}(Y^{i}, Y^{i-1}) 
$$
for $i \geq 3$.
\end{lemma}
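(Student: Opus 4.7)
The plan is to build $Y$ one skeleton at a time, feeding the matrix $A_i$ into Lemma~\ref{L:two} at each level $i\ge 3$. Since the hypotheses only constrain $Y$ in dimensions $i\ge 3$, I would first set $Y^2 := X^2$ with $\phi_2 := \id_{X^2}$. Because $X$ is connected, its $1$-skeleton is connected, and hence so is $X^2$; thus $Y^2$ is a connected $2$-dimensional CW-complex and $\phi_2$ is trivially cellular. This gives the base of the induction.

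For the inductive step, suppose that for some $i\ge 3$ I have produced a connected $(i-1)$-dimensional CW-complex $Y^{i-1}$ together with a cellular map $\phi_{i-1}\!:X^{i-1}\lra Y^{i-1}$ whose $j$-cell characteristic maps satisfy the stated identity for $3\le j\le i-1$. I would then apply Lemma~\ref{L:two} with the role of $n$ played by $i$, the role of $X$ played by $X^i$ (which is connected since $X$ is and $i\ge 2$), the role of $Y^{n-1}$ played by $Y^{i-1}$, the invertible integer matrix $A_i$, and the cellular map $\phi_{i-1}$. This yields an $i$-dimensional CW-complex $Y^i$ formed by attaching $l_i$ cells of dimension $i$ to $Y^{i-1}$, characteristic maps $G^i_\omega$ for the new cells, and a cellular extension $\phi_i\!:X^i\lra Y^i$ of $\phi_{i-1}$ with
\[
[G^i_\omega] = \sum_{\lambda} m^{(i)}_{\omega,\lambda}\,[\phi_i\circ F^i_\lambda] \in \pi_i(Y^i,Y^{i-1}),
\]
which is exactly the required identity. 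Connectedness propagates because attaching cells of dimension $\ge 2$ does not change path-connectedness, so $Y^i$ is again connected.

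Because $X$ is finite dimensional, say of dimension $N$, the recursion terminates after finitely many steps, and I would set $Y := Y^N$ and $\phi := \phi_N$. The identification of the indexing set $\Lambda_i$ between the $i$-cells of $X$ and of $Y$ is built in, since Lemma~\ref{L:two} attaches exactly $l_i = |\Lambda_i|$ cells of dimension $i$ at each stage. The cellularity of $\phi$ follows from the fact that $\phi|_{X^i}=\phi_i$ lands in $Y^i$ by construction at every level.

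I expect the real work to have already been done inside Lemma~\ref{L:two}; the only delicate point in the present proof is the bookkeeping needed to keep connectedness of $Y^{i-1}$ and cellularity of $\phi_{i-1}$ available at each inductive step, so that the hypotheses of Lemma~\ref{L:two} remain satisfied throughout. Beyond that, I do not anticipate any deeper technical obstacle.
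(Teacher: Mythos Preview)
Your proposal is correct and follows essentially the same inductive scheme as the paper: set $Y^i=X^i$ and $\phi_i=\id$ for $i\le 2$, then repeatedly invoke Lemma~\ref{L:two} with the matrix $A_i$ to build $Y^i$ and $\phi_i$ for $i\ge 3$. Your added remarks on connectedness and cellularity are reasonable bookkeeping that the paper's terse proof leaves implicit.
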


\begin{proof}
We will construct a CW-complex $Y^n$ inductively as follows. 
Let $Y^i = X^i$ and let $\phi_i = id_{X^i}$ for $i=0,1,2$.
Now we proceed by induction. Assuming for $n \geq 3$ that we have defined $Y^{n-1}$, we seek to define the CW-complex $Y^n$. 
Given the matrix $A_n = (m^{(n)}_{i,j})$, we apply Lemma \ref{L:two} to obtain $Y^n$ and the map $\phi_n:X^n \lra Y^n$ with the desired properties.
\end{proof}

We will also need the following immediate consequence of \cite[Theorem 4.5]{CT}.

\begin{lemma}\label{L:reg}
Let $X$ be a regular $2$-dimensional CW-complex. Then the standard basis of $\widetilde{C}(X;k)$ is 
a basis of minimal support. 
\end{lemma}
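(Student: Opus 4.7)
The plan is to invoke \cite[Theorem 4.5]{CT} directly, which reduces the claim to checking that $\partial_1(e)$ has minimal support in $\widetilde{C}_0(X;k)$ for each $1$-cell $e$ and that $\partial_2(f)$ has minimal support in $\widetilde{C}_1(X;k)$ for each $2$-cell $f$ of $X$. Since $X$ is $2$-dimensional, these are the only boundary maps that need to be verified.

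For the $1$-cells I would argue as follows. Regularity gives $\overline{e} \cong D^1$, so each $1$-cell $e$ has two distinct endpoints $v_+, v_- \in X^0$, and hence $\partial_1(e) = v_+ - v_-$ in $\widetilde{C}_0(X;k)$. Every nonzero reduced $0$-cycle $y$ has vanishing augmentation, so $supp(y)$ must contain at least two vertices. Therefore no nonzero $y \in \Ker(\widetilde{\partial}_0)$ satisfies $supp(y) \subsetneq \{v_+, v_-\}$, and $\partial_1(e)$ has minimal support.

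For the $2$-cells I would proceed as follows. Regularity forces the attaching map of each $2$-cell $f$ to be a homeomorphism from $S^1$ onto a subcomplex $C_f \subseteq X^1$ that is a topological circle, so $\partial_2(f) = \sum_{e \in C_f} \epsilon_e \, e$ with each $\epsilon_e \in \{\pm 1\}$. If $y \in \Ker(\partial_1)$ were nonzero with $supp(y) \subsetneq C_f$, then $y$ would be a nonzero $1$-cycle supported on a proper edge-subset of a topological circle. Any such proper edge-subset is a forest, whose $1$-cycle space over $k$ is trivial, yielding a contradiction.

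These two verifications exhaust the nonzero boundary maps of $\widetilde{C}(X;k)$, so the standard basis is a basis of minimal boundary support in the sense of \cite{CT}, and the conclusion follows from \cite[Theorem 4.5]{CT}. The only mildly nontrivial step is recognising that the attaching $1$-sphere of a regular $2$-cell embeds as a genuine topological circle in $X^1$; this is immediate from the definition of regularity, so I do not anticipate a substantive obstacle.
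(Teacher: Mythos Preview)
Your proposal is correct and aligns with the paper's approach: the paper simply records the lemma as an immediate consequence of \cite[Theorem~4.5]{CT} and gives no further argument. Your explicit verifications for the $1$-cells and $2$-cells are correct and in fact establish the minimal-support property directly from the definition, so once those checks are done the final appeal to \cite[Theorem~4.5]{CT} is not strictly necessary.
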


Finally, we need the following:

\begin{corollary}\label{C:last}
Let 
$k = \mathbb{Z}/p\mathbb{Z}$ with $p>0$ prime, 
let $X$ be a CW-complex with regular $2$-skeleton, and let
$$
C(X;k): \ldots \os{\partial_{n+1}}{\lra} C_n
\os{\partial_{n}}{\lra} C_{n-1} \os{\partial_{n-1}}{\lra} \dots
\os{\partial_{3}}{\lra} C_2 \os{\partial_{2}}{\lra} C_1 
\os{\partial_{1}}{\lra} C_0 \lra 0
$$ 
be the cellular chain complex for $X$ over $k$. Let $E$ be a basis of minimal support 
of $C(X;k)$ such that $E_i$ is the standard basis of $C_i$ for $i=0,1,2$. 

Then there exists a CW-complex $Y$ and a cellular map $\phi: X \lra Y$ such that $\phi_* :C(X;k) \lra C(Y;k)$ maps $E$ onto the standard basis given by the classes 
of the characteristic maps of $Y$. 




\end{corollary}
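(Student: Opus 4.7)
The approach is to build $Y$ and $\phi$ one skeleton at a time. Since $E_{i}$ is the standard basis for $i \leq 2$, I would take $Y^{2} = X^{2}$ and let $\phi$ be the identity on the $2$-skeleton; this is consistent with the regularity of $X^{2}$ (so Lemma~\ref{L:reg} ensures the standard basis really is of minimal support in low dimensions). For $i \geq 3$ I would then use Lemma~\ref{T:four} to attach $i$-cells to $Y^{i-1}$ and extend $\phi$ so that the resulting standard basis of $C_{i}(Y;k)$ matches $\phi_{*}(E_{i})$.

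Concretely, in each dimension $i \geq 3$ the basis $E_{i}$ is expressible in the standard basis by $E_{i}^{(\omega)} = \sum_{\lambda} c_{\omega\lambda}^{(i)} \, [F_{i}^{(\lambda)}]$, with change-of-basis matrix $C_{i} = (c_{\omega\lambda}^{(i)}) \in GL_{l_{i}}(k)$. The plan is to choose for each $i$ an integer invertible lift $M_{i} \in GL_{l_{i}}(\mathbb{Z})$ with $M_{i} \equiv C_{i} \pmod{p}$ and feed the family $\{M_{i}\}_{i \geq 3}$ into Lemma~\ref{T:four}. This produces $Y$ together with a cellular map $\phi : X \lra Y$ satisfying
\[
[G_{i}^{(\omega)}] = \sum_{\lambda} m_{\omega\lambda}^{(i)} \, [\phi \circ F_{i}^{(\lambda)}] \quad \text{in } \pi_{i}(Y^{i}, Y^{i-1}).
\]
Passing this identity through the Hurewicz homomorphism to $H_{i}(Y^{i}, Y^{i-1};\mathbb{Z})$ and reducing modulo~$p$ converts it into $[G_{i}^{(\omega)}] = \sum_{\lambda} c_{\omega\lambda}^{(i)} \phi_{*}([F_{i}^{(\lambda)}]) = \phi_{*}(E_{i}^{(\omega)})$ in $C_{i}(Y;k)$, exhibiting the required bijection of $\phi_{*}(E)$ with the standard basis of $C(Y;k)$.

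The main technical hurdle is the integer invertible lift of $C_{i}$: the determinant constraint $\det M_{i} = \pm 1$ forces $\det C_{i} \equiv \pm 1 \pmod{p}$, which is not automatic once $p \geq 5$. To circumvent this I would decompose $C_{i} = D_{i} \cdot E'_{i}$ with $E'_{i} \in SL_{l_{i}}(k)$ and $D_{i} = \mathrm{diag}(\det C_{i}, 1, \ldots, 1)$: the factor $E'_{i}$ lifts to $SL_{l_{i}}(\mathbb{Z})$ by surjectivity of the reduction map $SL_{l_{i}}(\mathbb{Z}) \twoheadrightarrow SL_{l_{i}}(k)$, and the residual diagonal correction $D_{i}$ should be absorbed by appealing to the flexibility afforded by Lemma~\ref{L:AlexTim}, which lets one pass (within the class of bases of minimal support agreeing with the standard basis through dimension~$2$) to a representative whose higher-dimensional change-of-basis matrices have determinant $\pm 1 \pmod{p}$. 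I expect verifying that this normalization can be performed without disturbing the matching already established in lower dimensions to be the most delicate bookkeeping in the argument.
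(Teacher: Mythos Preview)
Your core argument coincides with the paper's: for each $i\ge 3$ take the change-of-basis matrix $\overline{M_i}\in GL_{l_i}(k)$ from the standard basis to $E_i$, lift it to $M_i\in GL_{l_i}(\mathbb{Z})$, set $A_i$ accordingly, and invoke Lemma~\ref{T:four}. The paper's proof is two sentences long: it simply cites \cite[Lemma~6.3.10]{Co} for the surjectivity of $GL_l(\mathbb{Z})\to GL_l(\mathbb{Z}/p\mathbb{Z})$, picks a lift, and applies Lemma~\ref{T:four}, without pausing over the determinant at all.

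You are right to be uneasy about that lift: the reduction map $GL_l(\mathbb{Z})\to GL_l(\mathbb{Z}/p\mathbb{Z})$ is \emph{not} onto once $p\ge 5$, since every $M\in GL_l(\mathbb{Z})$ has $\det M=\pm 1$, so the paper's proof is at best imprecisely worded. However, your proposed repair through Lemma~\ref{L:AlexTim} is not the right tool. That lemma only asserts the \emph{existence} of some homogeneous minimal-support basis extending a given $C_0,C_1,C_2$; it gives you no control over the determinant of the resulting change-of-basis matrices, and in any case the corollary is stated for a \emph{fixed} $E$. The clean fix is much simpler and does not touch Lemma~\ref{L:AlexTim}: rescale a single element of each $E_i$ (for $i\ge 3$) by a unit of $k$ so that $\det C_i=1$. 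Scaling by units changes neither supports nor the incidence poset, and then the genuine surjection $SL_{l_i}(\mathbb{Z})\twoheadrightarrow SL_{l_i}(\mathbb{Z}/p\mathbb{Z})$ provides the integral lift you need. For the application in Theorem~\ref{T:main} this rescaling is harmless, so the issue is cosmetic rather than substantive; but your detour through Lemma~\ref{L:AlexTim} should be dropped.
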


\begin{proof}

For each $i \geq 3$ 
the matrix $\overline{M_i}$ that carries the standard basis $B_i$ of $C_i$ onto $E_i$ is invertible over $k$. 
It is well known, see, e.g. \cite[Lemma 6.3.10, p. 347]{Co}, that the natural map $GL_l(\mathbb{Z}) \lra GL_l(\mathbb{Z}/p\mathbb{Z})$ is a surjection, and let $M_i \in GL_l(\mathbb{Z})$. be a matrix that maps modulo $p$ to the matrix $\overline{M_i}$.

Let $A_i = M_i^{-1}$. Now apply Theorem~\ref{T:four} to $X$ and the matrices $A_i$, producing
a CW-complex $Y$ and a cellular map $\Phi: X \lra Y$ as desired.
\end{proof}

With all this, we can now prove our main result:

\begin{proof}[Proof of Theorem 3.1]
Since $X$ supports $\mathcal{F}$, we may assume that 
$k=\mathbb{Z}/p\mathbb{Z}$. By Lemma~\ref{L:reg}, the standard 
homogeneous basis of $\mathcal{F}$ coming from the cells of dimension less than or equal to $2$ is 
of minimal support, and by Lemma~\ref{L:AlexTim} this extends to a homogeneous basis $A$ of minimal support for 
$\mathcal{F}$. By Theorem~\ref{T:AlexTim47}, the incidence poset $P(\mathcal{F},A)$ supports $\mathcal{F}$. Dehomogenizing $A$ yields a basis $E$ of minimal support 
for $C(X;k)$. 
By Corollary~\ref{C:last} we get $Y$ with a face poset $P(Y)$ over $k$ that coincides with the poset 
$P(\mathcal{F},A)$.
\end{proof}

\end{document}